\newtheorem{thm}{Theorem}[section]
\newtheorem{prop}[thm]{Proposition}
\newtheorem{rem}[thm]{Remark}
\numberwithin{equation}{section}
\date{\today}
\def\Om{\Omega}
\subjclass[2020]{35Q35, 35B35, 76D05, 76D10}
\keywords{2D Navier-Stokes equations, anisotropic viscosity, inviscid limit, no-slip boundary conditions, boundary layers, $L^p$ space}
\begin{document}
\title[Vanishing vertical viscosity in 2D NSE]
{Vanishing Vertical Viscosity in Two-Dimensional Anisotropic Navier-Stokes Equations with No-Slip Boundary Conditions: An $L^p$ result}

\date{July 2, 2025}

\author[C. Cao]{Chongsheng Cao}
\address{Department of Mathematics \& Statistics \\Florida International University\\
Miami, Florida 33199, USA} \email{caoc@fiu.edu}

\author[Y. Guo]{Yanqiu Guo}
\address{Department of Mathematics \& Statistics \\Florida International University\\
Miami, Florida 33199, USA} \email{yanguo@fiu.edu}

\begin{abstract}
This paper studies the inviscid limit problem for the two-dimensional Navier-Stokes equations with anisotropic viscosity. The fluid is assumed to be bounded above and below by impenetrable walls, with a no-slip boundary condition imposed on the bottom wall. For $H^2$ initial velocity, we establish strong convergence in the $L^p$ norm to the limiting problem as the vertical viscosity approaches zero, for any $2\leq p <\infty$. The main challenge lies in the mismatch of boundary conditions—specifically, the no-slip condition in the original problem versus the slip condition in the limiting problem.
\end{abstract}

\maketitle
\section{Introduction}\label{intro}

\subsection{The model}
We consider the two-dimensional \emph{anisotropic} Navier-Stokes equations
\begin{equation}\label{vbou}
 \left\{
\begin{array}{l}
 u_t+uu_x+vu_y+ p_x +q_x = \nu_1 u_{xx} + \nu_2 u_{yy}, \\
 v_t+uv_x+vv_y+ p_y +q_y =  \nu_1 v_{xx} + \nu_2 v_{yy}, \\
 u_x+v_y=0, \\
 u(x,0,t)=u_y(x,1, t)= v(x,0,t)=v(x,1,t)=0,\\
 u(x,y,0)=u_0(x,y), \quad v(x,y,0)=v_0(x,y),
 \end{array} \right.
 \end{equation}
where $u, v, p+q$ are scalar functions of $(x,y)\in \Om = [-\pi, \pi]\times [0,1]$ and $t\ge 0$. These functions are periodic in the $x$ variable.
Here, $\nu_1$ and $\nu_2$ represent horizontal viscosity and vertical viscosity, respectively. In this article, we assume that $\nu_2 \ll  \nu_1$.
In addition, $(u,v)$ denotes the 2D velocity field, and $p+q$ represents the pressure. More precisely, the pressure is decomposed into two parts:
the flow pressure $p$ that satisfies (\ref{PP}), and the boundary layer pressure $q$ that satisfies (\ref{PQ}).
At the boundaries $y=0$ and $y=1$, the vertical velocity component $v$ is zero. Physically, this ensures that there is no fluid flow normal to the top and bottom walls.
At the bottom boundary $y=0$, the horizontal velocity component $u=0$. This represents a \emph{no-slip} condition at the bottom wall. At the top of the boundary $y=1$, the vertical derivative of the horizontal velocity vanishes, i.e., $u_y=0$.
Physically, this represents a free-slip condition at the top wall, where there is no shear stress acting on the fluid.

When $\nu_2=0$, we obtain the 2D NSE with only horizontal viscosity:
\begin{equation}\label{hbou}
 \left\{
\begin{array}{l}
 U_t+UU_x+VU_y+ P_x = \nu_1 U_{xx}, \\
 V_t+UV_x+VV_y+ P_y=  \nu_1 V_{xx}, \\
 U_x+V_y=0, \\
 V(x,0,t)=V(x,1,t)=0,\\
 U(x,y,0)=U_0(x,y), \quad V(x,y,0)=V_0(x,y).
 \end{array} \right.
 \end{equation}
In a recent paper \cite{Cao}, we have shown the global well-posedness of weak solutions to (\ref{hbou}) for initial data $U_0, V_0\in L^2(\Omega)$ with $\partial_y U_0 \in L^2(\Omega)$, as well as a uniform bound on the $H^2$-norm of strong solutions with $H^2$ initial data. 
In the present manuscript, we investigate the convergence of the solution of system (\ref{vbou}) to the solution of system (\ref{hbou}) as $\nu_2 \rightarrow 0^+$ for sufficiently regular initial data. Due to the mismatch in boundary conditions, one cannot expect convergence in a norm stronger than the $L^p-$norm, $2\leq p\leq \infty$.
The main result of this article is on the convergence of the solution of system (\ref{vbou}) to the solution of system (\ref{hbou}) as $\nu_2 \rightarrow 0^+$ in the $L^p$ norm for $2\leq p<\infty$, by assuming that the initial velocity is in $H^2(\Omega)$.

The study of vanishing viscosity in the NSE is a central topic in fluid dynamics. It involves analyzing the behavior of solutions to the NSE as the viscosity tends to zero, which connects the NSE to the Euler equations. Mathematically, the inviscid limit is a singular perturbation problem, as the highest-order term is formally dropped from the equations in the limit. The convergence is highly dependent on the boundary conditions and the regularity of the solutions. In the absence of boundaries, convergence can be established for smooth initial data \cite{ Ebin, Gol,  Kato-72, Swann} and for singular solutions \cite{Chemin-96, C-Wu}. However, in the presence of boundaries, the mismatch between the no-slip boundary conditions of the NSE and the slip boundary conditions of the Euler equations complicates the analysis. In this case, a boundary layer forms near the boundary, where the effects of viscosity cannot be neglected even at very low viscosity. The concept of a viscous boundary layer was introduced by Prandtl \cite{Prandtl} in 1904 under the no-slip boundary condition. The basic idea is that the fluid region can be divided into two parts: a thin layer close to the boundary, where the effect of viscosity is significant, and the outer region, where viscosity can be neglected. The thickness of the boundary layer is formally estimated as $O(\sqrt{\nu})$. The fundamental equations describing the boundary layer are the Prandtl equations. A classical result by Sammartino and Caflisch \cite{Caf} states that the Prandtl asymptotic expansion is valid for a short time for analytic data. 
Maekawa \cite{Mae} showed that the zero-viscosity limit holds for a short time when using the vorticity formulation in the half-plane, provided the initial vorticity remains bounded away from the boundary. This result has been extended to the half-space in $\mathbb R^3$ by Fei et al. \cite{Fei}. Mathematically, the main difficulty in the case of the no-slip boundary condition is the lack of a priori estimates in strong enough norms to pass to the limit. The classical Kato’s criterion \cite{Kato-84} states that the vanishing of energy dissipation in a small layer near the boundary is equivalent to the validity of the zero-viscosity limit in the energy space. Whether the vanishing viscosity limit holds generically, even for a short time under no-slip boundary conditions, remains largely an open problem.

Regarding the anisotropic Navier--Stokes equations in 3D, Masmoudi~\cite{Mas} studied the inviscid limit problem for weak solutions with distinct horizontal and vertical viscosities as they vanish at different rates. Specifically, he considered the case where the ratio of vertical to horizontal viscosity tends to zero. Chemin et al.~\cite{CDGG} investigated the vanishing vertical viscosity limit for the anisotropic NSE in the absence of a physical boundary. Iftimie and Planas~\cite{Iftimie} examined the half-space 3D flow under a Navier friction condition, where only the vertical viscosity vanishes, and proved convergence to an appropriate limit flow. Liu and Wang~\cite{Liu} analyzed the asymptotic behavior of solutions to the anisotropic NSE with a no-slip boundary condition in a half-space of $\mathbb{R}^3$ as the vertical viscosity tends to zero. Tao~\cite{Tao} studied the vanishing vertical viscosity limit for the anisotropic NSE with the no-slip boundary condition in the half-plane, rigorously justifying the asymptotic expansion in both the energy space and the $L^{\infty}$ space while deriving the optimal convergence rate.

Concerning the inviscid limit problem in the $L^p$ setting, Beir\~{a}o da Veiga and Crispo~\cite{BC} established the strong convergence of solutions for the 3D NSE to those of the Euler equations in a periodic channel with Navier boundary conditions, specifically in the space $W^{s,p}$ for $s<3$ and $p>3/2$. More recently, Aydin~\cite{Aydin} derived uniform bounds and proved the inviscid limit in $L^p$-based conormal Sobolev spaces for solutions of the NSE with Navier boundary conditions in the half-space. We emphasize that the no-slip boundary condition assumed in the present work differs fundamentally from the Navier boundary condition. Finally, we mention that Bardos et al. \cite{BNNT} proved the inviscid limit under the $L^p$ norm for 2D NSE on a general bounded domain with no-slip boundary conditions for initial data that are analytic near the boundary.

\vspace{0.1 in}

\subsection{The main result.} Our main result states that the solution of system (\ref{vbou}) converges strongly to the solution of system (\ref{hbou}) in the $L^p(\Omega)$ norm for any $2\leq p<\infty$, as the vertical viscosity $\nu_2 \rightarrow 0$, provided the initial data are sufficiently regular.

Throughout the paper, we denote $\|u\|_p = \|u\|_{L^p(\Omega)}$, where $\Omega = [-\pi,\pi] \times [0,1]$.

\begin{thm} \label{main-thm}
Let $(u,v)$ be the unique solution of system (\ref{vbou}) with initial data $u_0,v_0 \in H^2(\Omega)$. Let $(U,V)$ be the unique solution of system (\ref{hbou}) with initial data $U_0,V_0 \in H^2(\Omega)$.
Then, for any $T>0$, the following estimates hold:

\begin{enumerate}
\item  \begin{align}
&\int_0^T \|\nabla p\|_2^2 \, dt \leq M(\nu_1), \label{p-ebase} \\
&\int_0^T \|\nabla q\|_2^2 \, dt \leq \nu_2^{1/2} M(\nu_1). \label{q-ebase2}
\end{align}

\item For any $r\geq 2$,
\begin{align}
&\max_{0\leq t \leq T} (\|u\|_r + \|v\|_r)  \leq M(\nu_1, r, T), \label{u-ebase} \\
&\max_{0\leq t \leq T} \|v_x\|_2  \leq M(\nu_1). \label{v-x-ebase2}
\end{align}

\item  \begin{align}
\max_{0\leq t \leq T} \|u\|_{\infty}^2 &\leq M(\nu_1, T)  (e+ |\log \nu_2| ) \log (e+ |\log \nu_2| ). \label{u-infty-ebase}
\end{align}
\end{enumerate}
Furthermore,
\begin{align} \label{L2-dif}
\sup_{t\in [0,T]} (\| u - U\|_2 + \|v-V\|_2) \leq e^{M(\nu_1,  T)}\left(\|u_0 - U_0\|_2 +  \|v_0-V_0\|_2  +  \nu_2^{1/8}  \right).
\end{align}
In addition, if $(u_0,v_0)=(U_0,V_0)$, then
\begin{align} \label{Lp-dif}
\sup_{t\in [0,T]}(\|u-U\|_r + \|v-V\|_r) \rightarrow 0, \quad \text{as} \quad  \nu_2 \rightarrow 0,
\end{align}
for any $2\leq r <\infty$.
\end{thm}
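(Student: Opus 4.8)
The plan is to prove the convergence estimates by a direct energy argument on the difference $(w,z):=(u-U,\,v-V)$, treating the a priori bounds (1)--(3) as already-established inputs. Subtracting \eqref{hbou} from \eqref{vbou} and writing the nonlinear terms in divergence form, e.g. $uu_x-UU_x=u w_x+w U_x$ and $vu_y-VU_y=v w_y+z U_y$, one obtains transport-type equations for $w$ and $z$ forced by the linear coupling $(wU_x+zU_y,\,wV_x+zV_y)$, by the pressure difference $\Pi:=p+q-P$, and by the vertical-viscosity terms $\nu_2 u_{yy}$ and $\nu_2 v_{yy}$ (crucially, no $\nu_2$-diffusion of $U,V$ appears, since \eqref{hbou} has none). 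I would multiply the $w$-equation by $w$, the $z$-equation by $z$, integrate over $\Omega$, and add. The convective terms drop by $w_x+z_y=0$ together with $v=V=0$ on $y=0,1$ and $x$-periodicity; the pressure term drops as well, since $\int_\Omega \nabla\Pi\cdot(w,z)=-\int_\Omega \Pi(w_x+z_y)+\int_{\partial\Omega}\Pi\,(w,z)\cdot n=0$, using $w_x+z_y=0$, $z=0$ on the horizontal walls, and periodicity. The horizontal viscosity contributes the good dissipation $-\nu_1(\|w_x\|_2^2+\|z_x\|_2^2)$.

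The delicate structural feature is that only the horizontal dissipation is of order one, so the linear coupling terms must be closed using anisotropic (Agmon-type) inequalities in the $x$-variable together with the uniform $H^2$ bounds on $(U,V)$ from \cite{Cao} and estimate \eqref{u-ebase}; this produces Gr\"onwall terms bounded by $\epsilon\nu_1(\|w_x\|_2^2+\|z_x\|_2^2)+M(\nu_1)(\|w\|_2^2+\|z\|_2^2)$, exactly as in the global well-posedness analysis. For the vertical-viscosity contributions I would split $w=u-U$. The term $\nu_2\int v_{yy}z$ is harmless: after integration by parts every boundary term vanishes because $v=V=0$ at $y=0,1$, leaving $-\nu_2\|v_y\|_2^2+\nu_2\int v_yV_y$, the cross term being absorbed up to $O(\nu_2)$. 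The term $\nu_2\int u_{yy}w$ is the crux: it yields the dissipation $-\nu_2\|u_y\|_2^2$, a benign cross term $\nu_2\int u_yU_y\le \tfrac{\nu_2}{2}\|u_y\|_2^2+\tfrac{\nu_2}{2}\|U_y\|_2^2$, and---because $w(\cdot,0)=u(\cdot,0)-U(\cdot,0)=-U(\cdot,0)\neq 0$ at the no-slip wall---the genuinely problematic wall-shear boundary term
\[
\mathcal B(t):=\nu_2\int_{-\pi}^{\pi} u_y(x,0,t)\,U(x,0,t)\,dx,
\]
the top wall contributing nothing since $u_y(\cdot,1)=0$.

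I expect controlling $\mathcal B(t)$ to be the main obstacle, since no integration by parts removes it and it is precisely the footprint of the boundary-layer mismatch. My plan is to estimate it by the trace inequality $\|u_y(\cdot,0)\|_{L^2_x}^2\le \|u_y\|_2^2+2\|u_y\|_2\|u_{yy}\|_2$, which gives $\mathcal B\le C\nu_2\|U(\cdot,0)\|_{L^2_x}\big(\|u_y\|_2+\|u_y\|_2^{1/2}\|u_{yy}\|_2^{1/2}\big)$. The first summand is absorbed into $\nu_2\|u_y\|_2^2$ up to $O(\nu_2)$; the second forces degenerate a priori bounds on the vertical derivatives of $u$, of the form $\|u_y\|_2\lesssim \nu_2^{-1/2}$ and $\|u_{yy}\|_2\lesssim \nu_2^{-1}$, whose derivation is the true technical heart of the argument and is where estimate \eqref{u-infty-ebase} (together with the pressure splitting $p+q$) enters---the logarithmically growing $L^\infty$ bound is what lets one close the nonlinear terms in the $H^2$-type energy for $u$. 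With these bounds $\mathcal B\lesssim \nu_2^{1/4}$, so $\tfrac{d}{dt}(\|w\|_2^2+\|z\|_2^2)\le M(\nu_1)(\|w\|_2^2+\|z\|_2^2)+C\nu_2^{1/4}$, and Gr\"onwall yields \eqref{L2-dif}.

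Finally, \eqref{Lp-dif} follows by interpolation. When $(u_0,v_0)=(U_0,V_0)$, estimate \eqref{L2-dif} gives $\sup_{[0,T]}(\|w\|_2+\|z\|_2)\le e^{M(\nu_1,T)}\nu_2^{1/8}\to 0$. For $2<r<\infty$ I would fix $r_1\in(r,\infty)$ and use $\|w\|_r\le \|w\|_2^{1-\theta}\|w\|_{r_1}^{\theta}$ with $\tfrac1r=\tfrac{1-\theta}{2}+\tfrac{\theta}{r_1}$; since $\|w\|_{r_1}\le \|u\|_{r_1}+\|U\|_{r_1}$ stays uniformly bounded by \eqref{u-ebase}, and likewise for $z$, the right-hand side tends to $0$. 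The exclusion of $r=\infty$ is structural, not technical: the $O(\sqrt{\nu_2})$ boundary layer prevents $\|w\|_\infty$ from vanishing, consistent with the logarithmic growth in \eqref{u-infty-ebase}.
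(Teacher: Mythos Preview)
Your convergence argument is essentially the paper's: energy estimate on the difference, the pressure drops by incompressibility and the wall condition $z=0$, the nonlinear coupling is absorbed into the horizontal dissipation via anisotropic product estimates and the uniform $H^2$ bound on $(U,V)$, the only dangerous term is the wall-shear contribution $\mathcal B(t)=\nu_2\int u_y(x,0)U(x,0)\,dx$, and it is controlled by the same trace estimate $\|u_y(\cdot,0)\|_{L^2_x}^2\le 2\|u_y\|_2\|u_{yy}\|_2$ followed by Gr\"onwall and interpolation for $L^r$. Your choice to integrate $\nu_2\int u_{yy}w$ directly (rather than splitting $u_{yy}=\tilde u_{yy}+U_{yy}$ as the paper does) is in fact slightly cleaner, since $u_y(\cdot,1)=0$ kills the top-wall term outright; the paper carries an extra harmless term at $y=1$.

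Two points deserve correction. First, only the time-integrated bound $\int_0^T\nu_2^2\|u_{yy}\|_2^2\,dt\le M(\nu_1)$ is available (together with the energy identity $\nu_2\int_0^T\|u_y\|_2^2\,dt\le C$), not a pointwise $\|u_{yy}\|_2\lesssim\nu_2^{-1}$. This is enough: after Gr\"onwall you must bound $\nu_2\int_0^T\|u_y\|_2^{1/2}\|u_{yy}\|_2^{1/2}\,dt$, and H\"older in time against those two integrals gives the $\nu_2^{1/4}$ rate.

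Second, your attribution of the degenerate $u_{yy}$ bound to the $L^\infty$ estimate \eqref{u-infty-ebase} has the logic reversed. In the paper the bound $\int_0^T\nu_2^2\|u_{yy}\|_2^2\,dt\le M(\nu_1)$ comes from \emph{adding} the $\|u_y\|_2$ energy identity (inner product with $-u_{yy}$) to the $L^4$ energy identity (inner product with $u(u^2+v^2)$, $v(u^2+v^2)$); the coupling that forces this combination is the boundary-layer pressure, through $\|\nabla q\|_2^2\le C\nu_2^2\|u_{yy}\|_2\|u_{xy}\|_2$. The $L^r$ and $L^\infty$ bounds are derived \emph{afterward}, using this estimate as input, and the $L^\infty$ bound is not invoked anywhere in the convergence proof.
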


\vspace{0.1 in}

\begin{rem}
Our inviscid limit result stated in Theorem \ref{main-thm} has three notable features:
\begin{enumerate}
\item Equation (\ref{vbou}) describes an anisotropic 2D Navier–Stokes system, and we consider the limit as only the vertical viscosity approaches zero.
\item We impose the no-slip boundary condition at the bottom wall of the channel, which gives rise to a boundary layer near that wall.
\item The convergence holds in the $L^p$ norm for all $2 \leq p < \infty$.
\end{enumerate}
\end{rem}

\vspace{0.1 in}

\begin{rem}
The global well-posedness of weak solutions to equation (\ref{hbou}) with initial data $U_0, V_0 \in L^2(\Omega)$ and $\partial_y U_0 \in L^2(\Omega)$ was established in our recent paper \cite{Cao}. Furthermore, in \cite{Cao}, we also showed that for $U_0, V_0 \in H^2(\Omega)$, equation (\ref{hbou}) admits a unique global strong solution with a uniform bound $\|U\|_{H^2} + \|V\|_{H^2} \leq C$ for all $t \geq 0$, where the bound $C$ depends on the initial data but does not depend on time.

\end{rem}

\vspace{0.2 in}

\section{Estimates} \label{sec-est}

First, we present the following estimate for the triple product in terms of the Lebesgue norms of the functions and their derivatives in different directions. This inequality will be used frequently throughout the paper. Such inequalities are crucial for analyzing anisotropic NSE.
\begin{prop} \label{triple}
Let $m\geq 1$. Assume that $f, g, g_y, h_x\in L^2(\Om)$ and $h\in L^{2m}(\Om)$. Then
$$
 \int_{\Om}| f \, g\, h|  \;dxdy  \le C \, \|f\|_2 \, \|g\|_{2}^{\frac{m}{m+1}} (\|g\|_{2} +\|g_y\|_2)^{\frac{1}{m+1}} \,\|h\|_{2m}^{\frac{m}{m+1}} (\|h\|_{2}+\|h_x\|_2)^{\frac{1}{m+1}},
$$
where $C$ is a constant independent of $f$, $g$ and $h$.
\end{prop}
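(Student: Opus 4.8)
The strategy is to reduce the two-dimensional estimate to one-dimensional Gagliardo--Nirenberg inequalities applied successively in $x$ and in $y$, exploiting that the derivative control is anisotropic ($g_y$ vertically, $h_x$ horizontally). First, for each fixed $y$ I would apply the three-factor Hölder inequality in the $x$-variable in the form $\int_{-\pi}^{\pi}|f g h|\,dx \le \|f(\cdot,y)\|_{L^2_x}\,\|g(\cdot,y)\|_{L^2_x}\,\|h(\cdot,y)\|_{L^\infty_x}$. This isolates the full $L^2$ norm of $f$, keeps $g$ in $L^2_x$ so that its vertical derivative can be used later, and places $h$ in $L^\infty_x$ so that its horizontal derivative can be used. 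Writing $F(y)=\|f(\cdot,y)\|_{L^2_x}$, $\Gamma(y)=\|g(\cdot,y)\|_{L^2_x}$, and $H(y)=\|h(\cdot,y)\|_{L^\infty_x}$, the task becomes to bound $\int_0^1 F\Gamma H\,dy$.

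Second, I would apply a genuinely three-factor Hölder inequality in $y$ with exponents $\bigl(2,\tfrac{2(m+1)}{m-1},m+1\bigr)$, whose reciprocals sum to $1$, giving $\int_0^1 F\Gamma H\,dy \le \|F\|_{L^2_y}\,\|\Gamma\|_{L^{2(m+1)/(m-1)}_y}\,\|H\|_{L^{m+1}_y}$. The first factor is exactly $\|f\|_2$. For the middle factor, the one-dimensional Gagliardo--Nirenberg inequality on $[0,1]$ with target exponent $\tfrac{2(m+1)}{m-1}$ has derivative weight precisely $\tfrac{1}{m+1}$; combined with $\|\Gamma\|_{L^2_y}=\|g\|_2$ and the elementary bound $|\Gamma'(y)|\le\|g_y(\cdot,y)\|_{L^2_x}$ (from differentiating $\Gamma^2=\int g^2\,dx$ and Cauchy--Schwarz), this yields $\|\Gamma\|_{L^{2(m+1)/(m-1)}_y}\le C\|g\|_2^{m/(m+1)}(\|g\|_2+\|g_y\|_2)^{1/(m+1)}$, which is exactly the $g$-factor; the lower-order term in the bounded-interval inequality is what produces the sum $\|g\|_2+\|g_y\|_2$ rather than $\|g_y\|_2$ alone. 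The degenerate case $m=1$ is handled identically with the $y$-exponent $\infty$ (Agmon's inequality) in place of $\tfrac{2(m+1)}{m-1}$.

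Third, and this is the delicate part, I would estimate $\|H\|_{L^{m+1}_y}$. Here I would use the horizontal interpolation $\|h(\cdot,y)\|_{L^\infty_x}^{m+1}\le C\|h(\cdot,y)\|_{L^{2m}_x}^{m}\|h_x(\cdot,y)\|_{L^2_x}+C\|h(\cdot,y)\|_{L^2_x}^{m+1}$, derived by applying the fundamental theorem of calculus to $|h|^{m+1}$ together with Hölder, anchoring the lower-order term at a horizontal minimum point so that it carries $\|h(\cdot,y)\|_{L^2_x}$. Integrating in $y$ and applying Cauchy--Schwarz in $y$ bounds the main term by $\|h\|_{2m}^{m}\|h_x\|_2$ and the lower-order term by $\bigl(\int_0^1\|h(\cdot,y)\|_{L^2_x}^{2m}\,dy\bigr)^{1/2}\|h\|_2$. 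The crucial observation is that, because the horizontal interval has finite measure, $\|h(\cdot,y)\|_{L^2_x}\le C\|h(\cdot,y)\|_{L^{2m}_x}$ pointwise in $y$, so the remaining mixed-norm factor obeys $\bigl(\int_0^1\|h(\cdot,y)\|_{L^2_x}^{2m}\,dy\bigr)^{1/2}\le C\|h\|_{2m}^{m}$. Altogether $\int_0^1 H^{m+1}\,dy\le C\|h\|_{2m}^{m}(\|h\|_2+\|h_x\|_2)$, i.e. $\|H\|_{L^{m+1}_y}\le C$ times the $h$-factor, and combining the three factors yields the claim.

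I expect the main obstacle to be precisely this last step. The natural but wasteful route---extracting $\|f\|_2$ by Cauchy--Schwarz first and then splitting $\int_0^1\Gamma^2 H^2\,dy$---forces the appearance of $\int_0^1\|h(\cdot,y)\|_{L^{2m}_x}^{2m}\|h_x(\cdot,y)\|_{L^2_x}^{2}\,dy$, which cannot be separated into $\|h\|_{2m}^{2m}$ and $\|h_x\|_2^2$ without an uncontrolled $\sup_y$. Using the three-factor Hölder inequality in $y$ so that $H$ enters only to the first power inside $L^{m+1}_y$ is what renders the cross term linear in $\|h_x\|_2$ and hence tractable by a single Cauchy--Schwarz. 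One must also avoid bounding the lower-order term by Minkowski's integral inequality, which points the wrong way for the mixed norm $L^{2m}_y L^2_x$; the correct tool is the finite-measure embedding $L^{2m}_x\hookrightarrow L^2_x$ in the horizontal variable.
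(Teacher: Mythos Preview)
Your proof is correct and shares the same opening move as the paper---H\"older in $x$ with exponents $(2,2,\infty)$---but the two arguments diverge after that. The paper applies the one-dimensional Gagliardo--Nirenberg inequality to $h$ in the $x$-variable \emph{before} doing any H\"older in $y$, obtaining $\|h(\cdot,y)\|_{L^\infty_x}\le C\|h(\cdot,y)\|_{L^{2m}_x}^{m/(m+1)}(\|h(\cdot,y)\|_{L^2_x}+\|h_x(\cdot,y)\|_{L^2_x})^{1/(m+1)}$; this splits the $h$-contribution into two factors, and a four-factor H\"older in $y$ with exponents $(2,\tfrac{2(m+1)}{m-1},2(m+1),2(m+1))$ then yields the $h$-factor in one stroke, without your fundamental-theorem-of-calculus computation or the finite-measure embedding. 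For the $g$-factor the paper invokes Minkowski's integral inequality to swap $\big\|\|g\|_{L^2_x}\big\|_{L^{2(m+1)/(m-1)}_y}\le \big\|\|g\|_{L^{2(m+1)/(m-1)}_y}\big\|_{L^2_x}$ and then applies Gagliardo--Nirenberg in $y$ for each fixed $x$; you instead apply the one-dimensional inequality directly to the scalar function $\Gamma(y)=\|g(\cdot,y)\|_{L^2_x}$, using $|\Gamma'(y)|\le\|g_y(\cdot,y)\|_{L^2_x}$. Your route avoids Minkowski altogether, which is a genuine simplification on the $g$ side, while the paper's early interpolation on $h$ makes that side considerably shorter than your hands-on estimate of $\|H\|_{L^{m+1}_y}$. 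Both arguments are clean; yours is a bit more self-contained, the paper's a bit more streamlined.
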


\begin{proof}
The following formal calculation can be justified using a density argument. By applying Hölder's inequality, the Gagliardo–Nirenberg interpolation inequality, and Minkowski's integral inequality, we deduce:
\begin{eqnarray*}
&&
 \int_{\Om}| f \, g\, h|  \;dxdy  \le  \, \int_0^1 \|f\|_{L^2(-\pi,\pi)}\, \|g\|_{L^2(-\pi,\pi)} \,\|h\|_{L^{\infty}(-\pi,\pi)} \; dy \\
 && \le C \int_0^1 \,  \|f\|_{L^2(-\pi,\pi)}\, \|g\|_{L^2(-\pi,\pi)}  \,\|h\|_{L^{2m}(-\pi,\pi)}^{\frac{m}{m+1}}  \left(\|h\|_{L^2(-\pi,\pi)}+\|h_x\|_{L^2(-\pi,\pi)}\right)^{\frac{1}{m+1}} \; dy \\
 && \le C  \, \|f\|_2 \,  \,\|h\|_{2m}^{\frac{m}{m+1}}  (\|h\|_{2}+\|h_x\|_{2})^{\frac{1}{m+1}} \;
 \left\| \|g\|_{L^2(-\pi,\pi)} \right\|_{L^{\frac{2(m+1)}{m-1}}(0,1)}  \\
 && \le C  \, \|f\|_2 \,  \,\|h\|_{2m}^{\frac{m}{m+1}}  (\|h\|_{2}+\|h_x\|_{2})^{\frac{1}{m+1}} \;
\Big \| \|g \|_{L^{\frac{2(m+1)}{m-1}} (0,1)} \Big\|_{L^2(-\pi,\pi)}  \\
 && \le C  \, \|f\|_2 \,  \,\|h\|_{2m}^{\frac{m}{m+1}}  (\|h\|_{2}+\|h_x\|_{2})^{\frac{1}{m+1}} \;
 \|g \|_2^{\frac{m}{m+1}}  (\|g\|_{2}+\|g_y\|_{2})^{\frac{1}{m+1}}.
\end{eqnarray*}
\end{proof}

\vspace{0.1 in}

In what follows, we derive energy estimates for solutions of the 2D anisotropic NSE (\ref{vbou}). Since we assume $H^2$ initial data in Theorem \ref{main-thm}, the solutions are sufficiently regular to justify all the estimates below. The goal of these estimates is to establish uniform bounds for the $L^p$ norms that are \emph{independent} of $\nu_2$. This independence is crucial, as in the next section we prove the vanishing vertical viscosity limit by letting $\nu_2$ approach zero.

\subsection{$\| u\|_2$ and $\| v\|_2$ Estimates}

Taking the inner product of the first equation in (\ref{vbou}) with $u$, and the second equation with $v$, and integrating by parts, we obtain
\begin{eqnarray*}
\frac{1}{2}\frac{d}{dt} (\|u\|_2^2 + \|v\|_2^2) + \nu_1 (\|u_x\|_2^2 + \|v_x\|_2^2) + \nu_2 (\|u_y\|_2^2 + \|v_y\|_2^2) = 0.
\end{eqnarray*}

Integrating over the interval $[0, t]$, we get
\begin{align} \label{L2}
\|u\|_2^2 + \|v\|_2^2 + 2\nu_1 \int_0^t (\|u_x\|_2^2 + \|v_x\|_2^2) \, ds
+ 2\nu_2 \int_0^t (\|u_y\|_2^2 + \|v_y\|_2^2) \, ds = \|u_0\|_2^2 + \|v_0\|_2^2,
\end{align}
for all $t\geq 0$.

\vspace{0.1 in}

\subsection{$\|\nabla p\|_2$ and $\|\nabla q\|_2$ Estimates}

Since we study fluid flow in a channel domain with a no-slip boundary condition on the boundary $y = 0$, the pressure can be decomposed into two parts: the flow pressure $p$ and the boundary layer pressure $q$. We recover $p(x, y, t)$ and $q(x, y, t)$ by solving the following elliptic systems:
\begin{equation}\label{PP}
\left\{
\begin{array}{l}
\Delta p = -(uu_x + vu_y)_x - (uv_x + vv_y)_y = -2(u u_x)_x - 2(u v_x)_y, \\
p_y(x, 1, t) = p_y(x, 0, t) = 0, \\
\displaystyle\int_{\Omega} p(x, y, t) \, dxdy = 0,
\end{array}
\right.
\end{equation}
and
\begin{equation}\label{PQ}
\left\{
\begin{array}{l}
\Delta q = 0, \\
q_y(x, 1, t) = 0, \quad q_y(x, 0, t) = \nu_2 v_{yy}(x, 0, t), \\
\displaystyle\int_{\Omega} q(x, y, t) \, dxdy = 0.
\end{array}
\right.
\end{equation}

By standard elliptic theory, we have
\begin{align}
\|p\|_2 &\leq C \|u^2 + v^2\|_2, \label{PQ1} \\
\|\nabla p\|_2 &\leq C(\|u u_x\|_2 + \|u v_x\|_2). \label{PQ2}
\end{align}

Next, we derive a bound for $q(x, y, t)$. Since we assume that the flow is periodic in the $x$-variable, we can write
\begin{align*}
q(x, y, t) = \sum_{k \neq 0} \hat{q}^k(y,t) e^{ikx}.
\end{align*}
Then, due to (\ref{PQ}), we have
\begin{align}
&\hat{q}^k_{yy} - k^2 \hat{q}^k = 0, \label{qODE1}\\
&\hat{q}^k_y(1,t) = 0, \quad \hat{q}^k_y(0,t) = \nu_2 \hat{v}^k_{yy}(0,t) = -i \nu_2 k \hat{u}^k_y(0,t). \label{qODE2}
\end{align}

For each fixed $t\geq 0$, solving the second-order ODE (\ref{qODE1})-(\ref{qODE2}), we obtain
\begin{align}
\hat{q}^k(y,t) = \frac{i \nu_2 \hat{u}^k_y(0,t)}{1 - e^{-2k}} \left(e^{ky - 2k} + e^{-ky}\right), \label{QQ}
\end{align}
for $y\in [0,1]$ and $k \neq 0$. It follows from (\ref{QQ}) that
\begin{align} \label{QQ'}
\left| \hat{q}^k(y,t) \right| \leq C \nu_2  \left|\hat{u}^k_y(0,t) \right|.
\end{align}

Observe that
\begin{align*}
0 = -\int_{\Omega} q \Delta q \, dxdy = \int_{\Omega} |\nabla q|^2 \, dxdy + \int_{-\pi}^{\pi} q_y(0) q(0) \, dx.
\end{align*}
Therefore, using (\ref{qODE2}) and (\ref{QQ'}), we have
\begin{align}
\|\nabla q\|_2^2 & = \left| \int_{-\pi}^{\pi} q_y(0) q(0) \, dx \right| \leq C \nu_2^2 \sum_{|k| \neq 0} |k| \, |\hat{u}^k_y(0)|^2 \leq C \nu_2^2 \sum_{|k| \neq 0} \left| k \int_0^1 \frac{d}{dy} \left(|\hat{u}^k_y|^2 \right) dy \right| \notag\\
 &\leq C \nu_2^2 \sum_{|k| \neq 0} \int_0^1 | \hat{u}^k_{yy} \hat{u}^k_y k | \, dy \leq C \nu_2^2 \|u_{yy}\|_2 \|u_{yx}\|_2. \label{grad-q}
\end{align}

\vspace{0.1 in}

\subsection{$\|u_y\|_2$ Estimates}

Taking the inner product of the first equation in (\ref{vbou}) with $-u_{yy}$ and integrating by parts, we obtain
\begin{align} \label{ebase-4-0}
&\frac{1}{2}\frac{d}{dt} \|u_y\|_2^2 + \nu_1 \|u_{xy}\|_2^2 + \nu_2 \|u_{yy}\|_2^2 \notag \\
&\leq (\|p_x\|_2 + \|q_x\|_2) \|u_{yy}\|_2 \notag \\
&\leq C \left( \int_{\Omega} (u^2 + v^2)(|u_x|^2 + |v_x|^2) \, dxdy \right)^{1/2} \|u_{yy}\|_2  + C \nu_2 \|u_{yx}\|_2^{1/2} \|u_{yy}\|_2^{3/2},
\end{align}
where we used estimates (\ref{PQ2}) and (\ref{grad-q}).

Multiplying both sides of (\ref{ebase-4-0}) by $\nu_1 \nu_2$ yields
\begin{align*}
&\nu_1 \nu_2 \frac{d}{dt} \|u_y\|_2^2 + 2\nu_1^2 \nu_2 \|u_{xy}\|_2^2 + 2\nu_1 \nu_2^2 \|u_{yy}\|_2^2  \notag\\
&\leq C \nu_1 \nu_2 \left( \int_{\Omega} (u^2+v^2)(|u_x|^2 + |v_x|^2) \, dxdy \right)^{1/2} \|u_{yy}\|_2 + C \nu_1 \nu_2^2 \|u_{yx}\|_2^{1/2} \|u_{yy}\|_2^{3/2}.
\end{align*}

Applying Young's inequality, we obtain
\begin{align*}
\nu_1 \nu_2 \frac{d}{dt} \|u_y\|_2^2 + (2\nu_1 - C \nu_2) \nu_1 \nu_2 \|u_{xy}\|_2^2 + \nu_1 \nu_2^2 \|u_{yy}\|_2^2 \leq C \nu_1 \int_{\Omega} (u^2 + v^2)(|u_x|^2 + |v_x|^2) \, dxdy.
\end{align*}
Recall we assume $\nu_1 \gg \nu_2$ throughout the paper, so that $2\nu_1 - C \nu_2 \geq \nu_1$. As a result,
\begin{align} \label{ebase-4-1}
\nu_1 \nu_2 \frac{d}{dt} \|u_y\|_2^2 + \nu_1^2 \nu_2 \|u_{xy}\|_2^2 + \nu_1 \nu_2^2 \|u_{yy}\|_2^2 \leq C \nu_1 \int_{\Omega} (u^2 + v^2)(|u_x|^2 + |v_x|^2) \, dxdy.
\end{align}

\vspace{0.1 in}

\subsection{$\|u^2 + v^2\|_2$ Estimates}

Taking the inner product of the first equation in (\ref{vbou}) with $u(u^2+v^2)$, and the second equation with $v(u^2+v^2)$, and integrating by parts, we obtain
\begin{align} \label{u2-1}
& \frac{d}{dt}  \|u^2 + v^2\|_2^2
+ 4\nu_1 \int_{\Omega} (u^2 + v^2)(|u_x|^2 + |v_x|^2) \, dxdy + 4\nu_2 \int_{\Omega} (u^2 + v^2)(|u_y|^2 + |v_y|^2) \, dxdy  \notag \\
&\leq  4 \left|\int_{\Omega} p_x\, u (u^2 + v^2) \, dxdy \right|
+ 4 \left|\int_{\Omega} p_y\, v(u^2 + v^2) \, dxdy \right| \notag \\
&\quad + 4 \left| \int_{\Omega} q_x\, u(u^2 + v^2) \, dxdy  \right|
+ 4 \left| \int_{\Omega} q_y\, v(u^2 + v^2) \, dxdy \right|.
\end{align}

The terms on the right-hand side can be estimated as follows. By Proposition \ref{triple}, we have
\begin{align}   \label{u2-2}
& 4\left| \int_{\Omega} p_x\, u(u^2 + v^2) \, dxdy \right|
\leq  C\left| \int_{\Omega} p\, u (u u_x + v v_x) \, dxdy \right|
+ C \left| \int_{\Omega} p\, u_x (u^2 + v^2) \, dxdy \right| \notag \\
&\le C \|p\|_2^{1/2} \|p_y\|_2^{1/2} \|u\|_2^{1/2} \|u_x\|_2^{1/2} \|u u_x + v v_x\|_2  + C \|p\|_2^{1/2} \|p_y\|_2^{1/2} \|u_x\|_2 \|u^2 + v^2\|_2^{1/2} \|u u_x + v v_x\|_2^{1/2} \notag\\
&\leq  \frac{1}{2}\nu_1 \int_{\Omega} (u^2 + v^2)(|u_x|^2 + |v_x|^2) \, dxdy  +   C \nu_1^{-3} \|u^2 + v^2\|_2^2 (\|u\|_2^2+1)  \|u_x\|_2^2,
\end{align}
where we have used (\ref{PQ1})-(\ref{PQ2}) and Young's inequality.

Similarly,
\begin{align} \label{u2-4}
&4\left| \int_{\Omega} p_y \,  v(u^2 + v^2) \, dxdy \right|
\le C \|p_y\|_2 \|v\|_2^{1/2} \|v_y\|_2^{1/2} \|u^2 + v^2\|_2^{1/2} \|u u_x + v v_x\|_2^{1/2} \notag\\
& \leq  \frac{1}{2}\nu_1 \int_{\Omega} (u^2 + v^2)(|u_x|^2 + |v_x|^2) \, dxdy  +   C \nu_1^{-3}  \|v\|_2^2 \,   \|u_x\|_2^2 \,  \|u^2 + v^2\|_2^2.
\end{align}

Additionally, using Proposition \ref{triple} and estimate (\ref{grad-q}), we have
\begin{align} \label{u2-5}
&4 \left| \int_{\Omega} q_x \, u \, u^2 \, dxdy \right|
\le C \|\nabla q\|_2 \|u\|_2^{1/2} \|u_y\|_2^{1/2} \|u^2\|_2^{1/2} \|u u_x\|_2^{1/2} \notag \\
&\le C \nu_2 \|u_{yy}\|_2^{1/2} \|u_{yx}\|_2^{1/2} \|u\|_2^{1/2} \|u_y\|_2^{1/2} \|u^2\|_2^{1/2} \|u u_x\|_2^{1/2} \notag \\
&\le C (\nu_2 \|u_{yx}\|_2^2)^{1/4} (\nu_2^2 \|u_{yy}\|_2^2)^{1/4} \|u\|_2^{1/2} (\nu_2 \|u_y\|_2^2)^{1/4} \|u^2\|_2^{1/2} \|u u_x\|_2^{1/2} \notag\\
&\leq   \frac{1}{2}\nu_1  \|u u_x\|_2^2  +  \frac{1}{4} \epsilon \nu_1^2 \nu_2 \|u_{yx}\|_2^2 + \frac{1}{4} \epsilon \nu_1 \nu_2^2 \|u_{yy}\|_2^2
+ C_{\epsilon} \nu_1^{-4} \nu_2 \|u_y\|_2^2 \, \|u\|_2^2 \,  \|u^2\|_2^2,
\end{align}
where $\epsilon>0$ will be chosen later.

Similarly, we deduce
\begin{align} \label{u2-6}
&4\left| \int_{\Omega} q_x \, u \, v^2 \, dxdy \right| + 4\left|\int_{\Omega} q_y \, v (u^2 + v^2) \, dxdy \right|
\le C\int_{\Omega} |q_x + q_y| \, |v| \, (u^2 + v^2) \, dxdy \notag \\
&\le C \|\nabla q\|_2 \|v\|_2^{1/2} \|u_x\|_2^{1/2} \|u^2 + v^2\|_2^{1/2} \left( \int_{\Omega} (u^2 + v^2)(|u_x|^2 + |v_x|^2) \, dxdy \right)^{1/4} \notag \\
&\le C \nu_2 \|u_{yy}\|_2^{1/2} \|u_{yx}\|_2^{1/2} \|v\|_2^{1/2} \|u_x\|_2^{1/2} \|u^2 + v^2\|_2^{1/2}  \left( \int_{\Omega} (u^2 + v^2)(|u_x|^2 + |v_x|^2) \, dxdy \right)^{1/4} \notag \\
&\le C (\nu_2 \|u_{yx}\|_2^2)^{1/4} (\nu_2^2 \|u_{yy}\|_2^2)^{1/4} \|v\|_2^{1/2} (\nu_2 \|u_x\|_2^2)^{1/4} \|u^2 + v^2\|_2^{1/2} \notag \\
&\quad \times \left( \int_{\Omega} (u^2 + v^2)(|u_x|^2 + |v_x|^2) \, dxdy \right)^{1/4} \notag\\
&\leq  \frac{1}{2}\nu_1 \int_{\Omega} (u^2 + v^2)(|u_x|^2 + |v_x|^2) \, dxdy + \frac{1}{4} \epsilon \nu_1^2 \nu_2 \|u_{yx}\|_2^2 + \frac{1}{4}\epsilon \nu_1 \nu_2^2 \|u_{yy}\|_2^2    \notag\\
&\quad  + C_{\epsilon} \nu_1^{-4} \nu_2 \|u_x\|_2^2  \, \|v\|_2^2  \,  \|u^2 + v^2\|_2^2.
\end{align}

It follows from (\ref{u2-1})-(\ref{u2-6}) that
\begin{align}  \label{u2-6'}
&\frac{d}{dt} \|u^2 + v^2\|_2^2 + 2\nu_1 \int_{\Omega} (u^2 + v^2)(|u_x|^2 + |v_x|^2) \, dxdy + 4\nu_2 \int_{\Omega} (u^2 + v^2)(|u_y|^2 + |v_y|^2) \, dxdy \notag\\
&\leq  C_{\epsilon} \nu_1^{-4} \|u^2 + v^2\|_2^2 (\|u\|_2^2+  \|v\|_2^2  +  1)  (\nu_1 \|u_x\|_2^2 +   \nu_2 \|u_y\|_2^2)   \notag\\
& \quad + \frac{1}{2} \epsilon \nu_1^2 \nu_2 \|u_{yx}\|_2^2 +  \frac{1}{2} \epsilon \nu_1 \nu_2^2 \|u_{yy}\|_2^2.
\end{align}

Multiplying (\ref{ebase-4-1}) by $\epsilon$ gives
\begin{align} \label{ebase-4-2}
\epsilon \nu_1 \nu_2 \frac{d}{dt} \|u_y\|_2^2 +   \epsilon   \nu_1^2 \nu_2 \|u_{xy}\|_2^2 +   \epsilon   \nu_1 \nu_2^2 \|u_{yy}\|_2^2
\leq      C   \epsilon     \nu_1 \int_{\Omega} (u^2 + v^2)(|u_x|^2 + |v_x|^2) \, dxdy.
\end{align}

Adding (\ref{u2-6'}) and (\ref{ebase-4-2}), and selecting $\epsilon > 0$ sufficiently small, we obtain
\begin{align*}
&\frac{d}{dt} \left( \|u^2 + v^2\|_2^2 + \epsilon \nu_1 \nu_2 \|u_y\|_2^2 \right)
+ \nu_1 \int_{\Omega} (u^2 + v^2)(|u_x|^2 + |v_x|^2) \, dxdy \\
&\quad + 4\nu_2 \int_{\Omega} (u^2 + v^2)(|u_y|^2 + |v_y|^2) \, dxdy + \frac{1}{2}\epsilon  \nu_1^2 \nu_2 \|u_{xy}\|_2^2 + \frac{1}{2}\epsilon \nu_1 \nu_2^2 \|u_{yy}\|_2^2 \\
&\leq C_{\epsilon} \nu_1^{-4} \|u^2 + v^2\|_2^2 (\|u\|_2^2+  \|v\|_2^2  +  1)  (\nu_1 \|u_x\|_2^2 +   \nu_2 \|u_y\|_2^2).
\end{align*}

Thanks to Gr\"onwall's inequality, it follows that
\begin{align} \label{u2-7}
&\|u^2 + v^2\|_2^2 + \epsilon \nu_1 \nu_2 \|u_y\|_2^2
+ \nu_1 \int_0^t \int_{\Omega} (u^2 + v^2)(|u_x|^2 + |v_x|^2) \, dxdy \, ds \notag \\
&\quad + \nu_2 \int_0^t \int_{\Omega} (u^2 + v^2)(|u_y|^2 + |v_y|^2) \, dxdy \, ds + \epsilon \int_0^t \left( \nu_1^2 \nu_2 \|u_{xy}\|_2^2 + \nu_1 \nu_2^2 \|u_{yy}\|_2^2 \right) ds \notag \\
&\leq e^{C_{\epsilon} \nu_1^{-4} (\|u_0\|_2^2 + \|v_0\|_2^2 +1 ) \int_0^t (\nu_1 \|u_x\|_2^2 + \nu_2 \|u_y\|_2^2) \, ds} \left( \|u_0^2 + v_0^2\|_2^2 + \epsilon \nu_1 \nu_2 \|\partial_y u_0\|_2^2 \right) \notag \\
&\leq  e^{C_{\epsilon} \nu_1^{-4} (\|u_0\|_2^4 + \|v_0\|_2^4 +1 ) } \left( \|u_0^2 + v_0^2\|_2^2 + \epsilon \nu_1^2 \|\partial_y u_0\|_2^2 \right) =: M(\nu_1),
\end{align}
for all $t\geq 0$, where we have used estimate (\ref{L2}) and $\nu_1 \gg \nu_2$. Notice that the bound $M(\nu_1)$ is independent of $\nu_2$. In particular, inequality (\ref{u2-7}) shows that
\begin{align} \label{u2-8}
\int_0^t \nu_2^2 \|u_{yy}\|_2^2 \, ds \leq M(\nu_1).
\end{align}
We emphasize that the uniform bound (\ref{u2-8}) is important for proving that the solution of system (\ref{vbou}) converges to the solution of system (\ref{hbou}) in the $L^p$ norm as $\nu_2 \to 0$ in Section \ref{sec-vanish}.

Moreover, due to (\ref{PQ2}), (\ref{grad-q}), and (\ref{u2-7}), we have the estimates
\begin{align}
\int_0^t \|\nabla p\|_2^2 \, ds &\leq M(\nu_1), \label{ebase} \\
\int_0^t \|\nabla q\|_2^2 \, ds &\leq \nu_2^{1/2} M(\nu_1). \label{ebase2}
\end{align}

\vspace{0.1 in}

\subsection{$\| v_x\|_2$ Estimates}

Taking the inner product of the second equation in (\ref{vbou}) with $-v_{xx}$ and integrating by parts, we obtain
\begin{align*}
\frac{1}{2}\frac{d}{dt} \|v_x\|_2^2 + \nu_1 \|v_{xx} \|_2^2 + \nu_2 \|v_{xy} \|_2^{2}
&\leq  (\|p_y\|_2 + \|q_y\|_2) \|v_{xx}\|_2 \notag\\
&\leq \frac{1}{2} \nu_1  \|v_{xx} \|_2^2 + C \nu_1^{-1}  (\|p_y\|_2^2 + \|q_y\|_2^2).
\end{align*}
It follows that
\begin{eqnarray*}
\frac{d}{dt} \|v_x\|_2^2 + \nu_1 \|v_{xx}\|_2^2 + \nu_2 \|v_{xy}\|_2^{2}
\leq C   \nu_1^{-1}  (\|\nabla p\|_2^2 + \|\nabla q\|_2^2).
\end{eqnarray*}

Integrating over time and using estimates (\ref{ebase})--(\ref{ebase2}) gives
\begin{eqnarray*}
\|v_x\|_2^2 + \nu_1 \int_0^t \|v_{xx}\|_2^2 \, ds + \nu_2 \int_0^t \|v_{xy}\|_2^{2} \, ds
\leq \|\partial_x v_0\|_2^2 + M(\nu_1),
\end{eqnarray*}
for all $t\geq 0$.

\vspace{0.1 in}

\subsection{$\| p\|_3$ Estimates}

Taking the inner product of the first equation in (\ref{vbou}) with $u^5$ and the second equation with $v^5$, and integrating by parts, we obtain:
\begin{align*}
&  \frac{1}{6}\frac{d}{dt} \int_{\Omega} (u^6 + v^6) \, dx dy
+ 5 \nu_1 \int_{\Omega} (u^4 |u_x|^2 + v^4 |v_x|^2) \, dx dy
+ 5 \nu_2 \int_{\Omega} (u^4 |u_y|^2 + v^4 |v_y|^2) \, dx dy \notag \\
& = -\int_{\Omega} p_x \, u^5 \, dx dy
- \int_{\Omega} p_y \, v^5 \, dx dy
- \int_{\Omega} q_x \, u^5 \, dx dy
- \int_{\Omega} q_y \, v^5 \, dx dy.
\end{align*}

The terms on the right-hand side can be estimated as follows. Applying Proposition \ref{triple}, we obtain:
\begin{align*}
\left| \int_{\Omega} p_x \, u^5 \, dx dy \right|
= \left| 5 \int_{\Omega} p \, u_x \, u^4 \, dx dy \right| \leq C \|p\|_2^{1/2} \|p_y\|_2^{1/2} \, \|u^2\|_2^{1/2} \|u u_x\|_2^{1/2} \left\|u^2 u_x \right\|_2.
\end{align*}

Similarly,
\begin{align*}
\left| \int_{\Omega} p_y \, v^5 \, dx dy \right|
\leq C \|p_y\|_2 \, \|v^2\|_2^{1/2} \|v v_y\|_2^{1/2} \, \|v^3\|_2^{1/2} \left\|v^2 v_x \right\|_2^{1/2}.
\end{align*}

Moreover, applying Proposition \ref{triple} and using (\ref{grad-q}), we obtain:
\begin{align*}
\left| \int_{\Omega} q_x \, u^5 \, dx dy \right|
&\le C \|\nabla q\|_2 \, \|u^2\|_2^{1/2} \|u u_y\|_2^{1/2} \, \|u^3\|_2^{1/2} \left\|u^2 u_x \right\|_2^{1/2} \notag \\
&\le C \nu_2 \|u_{yy}\|_2^{1/2} \|u_{yx}\|_2^{1/2} \, \|u^2\|_2^{1/2} \|u u_y\|_2^{1/2} \, \|u^3\|_2^{1/2} \left\|u^2 u_x \right\|_2^{1/2} \notag \\
&\le C \left( \nu_2 \|u_{yx}\|_2^2 \right)^{1/4} \left( \nu_2^2 \|u_{yy}\|_2^2 \right)^{1/4} \, \|u^2\|_2^{1/2} \left( \nu_2 \|u u_y\|_2^2 \right)^{1/4} \, \|u^3\|_2^{1/2} \left\|u^2 u_x \right\|_2^{1/2}.
\end{align*}

Similarly,
\begin{align*}
\left| \int_{\Omega} q_y \, v^5 \, dx dy \right|
&\le C \|\nabla q\|_2 \, \|v^2\|_2^{1/2} \|v u_x\|_2^{1/2} \, \|v^3\|_2^{1/2} \|v^2 v_x\|_2^{1/2} \notag \\
&\le C \nu_2 \|u_{yy}\|_2^{1/2} \|u_{yx}\|_2^{1/2} \, \|v^2\|_2^{1/2} \|v u_x\|_2^{1/2} \, \|v^3\|_2^{1/2} \|v^2 v_x\|_2^{1/2} \notag \\
&\le C \left( \nu_2 \|u_{yx}\|_2^2 \right)^{1/4} \left( \nu_2^2 \|u_{yy}\|_2^2 \right)^{1/4} \, \|v^2\|_2^{1/2} \left( \nu_2 \|v u_x\|_2^2 \right)^{1/4} \, \|v^3\|_2^{1/2} \|v^2 v_x\|_2^{1/2}.
\end{align*}

Thus, combining the above estimates and applying Young's inequality, we obtain:
\begin{align*}
& \frac{d}{dt} \left( \|u\|_6^6 + \|v\|_6^6 \right)
+ \nu_1 \left( \|u^2 u_x\|_2^2 + \|v^2 v_x\|_2^2 \right)
+ \nu_2 \left( \|u^2 u_y\|_2^2 + \|v^2 v_y\|_2^2 \right) \\
& \leq C_{\nu_1} \big[\|p\|_2^2 \|p_y\|_2^2 +  \|u^2\|_2^2 \|u u_x\|_2^2
+  \|p_y\|_2^2 +  \|v^2\|_2^2 \|v u_x\|_2^2 \|v^3\|_2^2  +  \nu_2 \|u_{yx}\|_2^2 +  \nu_2^2 \|u_{yy}\|_2^2 \\
& \quad \quad \quad +  (\|u^2\|_2^2 + \|v^2\|_2^2) \left( \nu_2 \|u u_y\|_2^2 + \nu_2 \|v u_x\|_2^2 \right)
(\|u^3\|_2^2 + \|v^3\|_2^2) \big].
\end{align*}

Thanks to Gr\"onwall's inequality and the bounds (\ref{u2-7}) and (\ref{ebase}), we obtain:
\begin{align*}
\|u\|_6^6 + \|v\|_6^6
+ \nu_1 \int_0^t \left( \|u^2 u_x\|_2^2 + \|v^2 v_x\|_2^2 \right) \, ds
+ \nu_2 \int_0^t \left( \|u^2 u_y\|_2^2 + \|v^2 v_y\|_2^2 \right) \, ds
\leq M_1(\nu_1).
\end{align*}

As a result, using (\ref{PP}), we conclude:
\begin{align}     \label{p3est}
\|p\|_3^3 \leq C \left( \|u\|_6^6 + \|v\|_6^6 \right)
\leq M_1(\nu_1).
\end{align}

\vspace{0.1 in}

\subsection{$\| u\|_{2r}$ Estimates for $r > 1$}

We begin by decomposing the pressure term $p$ into low- and high-frequency components. Specifically, for any $R > 0$, we write:
\[
p = \bar{p} + \tilde{p},
\]
where the decomposition satisfies the following estimates:
\begin{align}
& \|\bar{p}\|_{\infty}^2 \leq \log R \, \| \nabla p \|_2^2,  \label{urr-1}\\
& \|\tilde{p}\|_2 \leq \frac{C}{R} \, \| \nabla p \|_2. \label{urr-2}
\end{align}

As a consequence, we obtain:
\begin{align} \label{tp3}
\|\tilde{p}\|_3 \leq C\|\tilde{p}\|_2^{\frac{2}{3}} \, \| \nabla p \|_2^{\frac{1}{3}}
\leq C R^{-\frac{2}{3}} \, \| \nabla p \|_2.
\end{align}

Taking the inner product of the first equation in (\ref{vbou}) with $u^{2r-1}$ and integrating by parts, we obtain:
\begin{align}  \label{urr-3}
&\frac{d}{dt} \|u^r\|_2^2
+ \nu_1 \| (u^r)_x \|_2^2
+ \nu_2 \| (u^r)_y \|_2^2 \notag \\
&\leq C r \left| \int_{\Omega} p_x \, u^{2r-1} \, dx dy + \int_{\Omega} q_x \, u^{2r-1} \, dx dy \right|  \notag\\
&\leq C r \Big[ (2r - 1) \int_{\Omega} |\bar{p} \, u_x \, u^{2(r-1)}| \, dx dy
+ (2r - 1) \int_{\Omega} |\tilde{p} \, u_x \, u^{2(r-1)}| \, dx dy \notag\\
&\hspace{0.4 in} + \int_{\Omega} |q_x \, u^{2r-1}| \, dx dy \Big].
\end{align}

The terms on the right-hand side are estimated as follows:
\begin{align} \label{urr-4}
\int_{\Omega} |\bar{p} \, u_x \, u^{2r-2}| \, dx dy
= \frac{1}{r} \int_{\Omega} |\bar{p} \, (u^r)_x \, u^{r-1}| \, dx dy \leq \frac{1}{r} \|\bar{p}\|_{\infty} \, \|u^{r-1}\|_2 \, \| (u^r)_x \|_2.
\end{align}

Using Proposition \ref{triple}, we estimate:
\begin{align}  \label{urr-5}
\int_{\Omega} |\tilde{p} \, u_x \, u^{2r-2}| \, dx dy
&= \frac{1}{r} \int_{\Omega} |\tilde{p} \, (u^r)_x \, u^{r-1}| \, dx dy  \notag\\
&\leq \frac{C}{r} \|\tilde{p}\|_3^{3/5} \, \|\tilde{p}_y\|_2^{2/5} \, \|u^{r-1}\|_2^{3/5} \, \|(u^{r-1})_x\|_2^{2/5} \, \|(u^r)_x\|_2 \notag \\
&\leq \frac{C}{r} \|\tilde{p}\|_3^{3/5} \, \|\tilde{p}_y\|_2^{2/5} \, \|u^{r-1}\|_2^{3/5} \, \|u_x\|_2^{\frac{2}{5(r-1)}} \, \|(u^r)_x\|_2^{\frac{2(r-2)}{5(r-1)}} \, \|(u^r)_x\|_2,
\end{align}
where we used Hölder's inequality in the last step.

Also, using the interpolation inequality, we deduce:
\begin{align} \label{urr-6}
\int_{\Omega} |q_x \, u^{2r-1}| \, dx dy
&\leq \|q_x\|_2 \, \|u^{2r-1}\|_2
= \|q_x\|_2 \, \|u^r\|_{\frac{2(2r-1)}{r}}^{\frac{2r-1}{r}}    \notag \\
&\leq C \|q_x\|_2 \, \|u^r\|_2 \, \|(u^r)_y\|_2^{\frac{r-1}{2r}} \, \|(u^r)_x\|_2^{\frac{r-1}{2r}}.
\end{align}

Combining the above estimates (\ref{urr-3})-(\ref{urr-6}) and applying Young's inequality, we obtain:
\begin{align} \label{urr-7}
&\frac{d}{dt} \|u^r\|_2^2
+ \nu_1 \|(u^r)_x\|_2^2
+ \nu_2 \|(u^r)_y\|_2^2  \notag \\
&\leq C(\nu_1) \Big[r^2 \|\bar{p}\|_{\infty}^2 \, \|u^{r-1}\|_2^2
+  r^{10/3} \|\tilde{p}\|_3^2 \, \|\tilde{p}_y\|_2^{4/3} \, \|u^{r-1}\|_2^2 + \|u_x\|_2^2  \notag\\
&\hspace{0.6 in} +  r^{\frac{2r}{r+1}} \nu_2^{-\frac{r-1}{2(r+1)}} \|q_x\|_2^{\frac{2r}{r+1}} \, \|u^r\|_2^{\frac{2r}{r+1}}  \Big] \notag\\
&\leq C(\nu_1) \Big[r^2 \log R \, \|\nabla p\|_2^2 \, \|u^{r-1}\|_2^2
+  r^{10/3} R^{-4/9}   \|\tilde{p}\|_3^{4/3}     \|\nabla p\|_2^2 \, \|u^{r-1}\|_2^2 + \|u_x\|_2^2 \notag \\
&\hspace{0.6 in} +  r^{\frac{2r}{r+1}} \nu_2^{-\frac{r-1}{2(r+1)}} \|q_x\|_2^{\frac{2r}{r+1}} \, \|u^r\|_2^{\frac{2r}{r+1}} \Big]  ,
\end{align}
where we have used (\ref{urr-1}) and (\ref{tp3}) which implies:
$\|\tilde{p}\|_3^{2/3} \leq C R^{-4/9} \, \|\nabla p\|_2^{2/3}$.

Let $r^{\frac{7}{3}} R^{-\frac{4}{9}} = 1$. We then deduce:
\begin{align} \label{urr-8}
&\frac{d}{dt} \|u^r\|_2^2
+ \nu_1 \| (u^r)_x \|_2^2
+ \nu_2 \| (u^r)_y \|_2^2  \notag\\
&\leq C(\nu_1) \Big[r^2 \log r \, (1 + \|\tilde{p}\|_3^{4/3}) \, \|\nabla p\|_2^2 \, \|u^{r-1}\|_2^2
+ \|u_x\|_2^2  +  r^{\frac{2r}{r+1}} \nu_2^{-\frac{r-1}{2(r+1)}} \|q_x\|_2^{\frac{2r}{r+1}} \, \|u^r\|_2^{\frac{2r}{r+1}} \Big] \notag\\
&\leq C(\nu_1) \Big[ r^2 \log r \, (1 + \|\tilde{p}\|_3^2) \, \|\nabla p\|_2^2 \, \|u\|_2^{\frac{2}{r-1}} \, \|u^r\|_2^{\frac{2(r-2)}{r-1}}
+ \|u_x\|_2^2 \notag\\
&\hspace{0.6 in} +  r^{\frac{2r}{r+1}} \nu_2^{-\frac{r-1}{2(r+1)}} \|q_x\|_2^{\frac{2r}{r+1}} \, \|u^r\|_2^{\frac{2r}{r+1}} \Big].
\end{align}

If we let $\eta(t) = \|u^r\|_2^{\frac{2}{r}}$, then $\|u^r\|_2^2 = [\eta(t)]^r$, and thus $\frac{d}{dt} \|u^r\|_2^2 = r \eta^{r-1} \eta_t$.
Then, from (\ref{urr-8}), we have
\begin{align*}
r \eta^{r-1} \eta_t \leq\;  C(\nu_1) \Big[r^2 \log r \, (1 + \|\tilde{p}\|_3^2) \, \|\nabla p\|_2^2 \, \|u\|_2^{\frac{2}{r-1}} \, \eta^{\frac{r(r-2)}{r-1}} + \|u_x\|_2^2 +  r^{\frac{2r}{r+1}} \nu_2^{-\frac{r-1}{2(r+1)}} \|q_x\|_2^{\frac{2r}{r+1}} \, \eta^{\frac{r^2}{r+1}} \Big].
\end{align*}
Dividing both sides by $r \eta^{r-1}$ yields
\begin{align}  \label{eta1}
\eta_t \leq\; & C(\nu_1) \Big[r \log r \, (1 + \|\tilde{p}\|_3^2) \, \|\nabla p\|_2^2 \, \|u\|_2^{\frac{2}{r-1}} \, \eta^{-\frac{1}{r-1}} + \|u_x\|_2^2 \,\eta^{-(r-1)} +  r^{\frac{r-1}{r+1}} \nu_2^{-\frac{r-1}{2(r+1)}} \|q_x\|_2^{\frac{2r}{r+1}} \, \eta^{\frac{1}{r+1}} \Big] \notag \\
\leq\; & C(\nu_1) \Big[ r \log r \, \|\nabla p\|_2^2 + \|u_x\|_2^2 +  r \nu_2^{-\frac{r-1}{2(r+1)}} \|q_x\|_2^{\frac{2r}{r+1}} \, \eta^{\frac{1}{r+1}} \Big],
\end{align}
where we have used (\ref{p3est}), and without loss of generality, we assume that $\eta(t) \geq 1$ for all $t \geq 0$.

Next, let $\psi = \eta^{\frac{1}{r+1}}$, so that $\eta = \psi^{r+1}$ and $\eta_t = (r+1) \psi^r \psi_t$. Then (\ref{eta1}) implies
\begin{align} \label{psi-1}
\psi_t \leq C(\nu_1)\Big[ \log r \, \|\nabla p\|_2^2 + \|u_x\|_2^2 +  \nu_2^{-\frac{r-1}{2(r+1)}} \|q_x\|_2^{\frac{2r}{r+1}}\Big].
\end{align}

Using H\"older's inequality and (\ref{ebase2}), we obtain
\begin{align} \label{eta2}
\nu_2^{-\frac{r-1}{2(r+1)}} \int_0^t \|q_x\|_2^{\frac{2r}{r+1}} \, ds
&\leq  \nu_2^{-\frac{r-1}{2(r+1)}} \left( \int_0^t \|q_x\|_2^2 \, ds \right)^{\frac{r}{r+1}} t^{\frac{1}{r+1}} \notag \\
&\leq C(\nu_1) \nu_2^{-\frac{r-1}{2(r+1)}} \nu_2^{\frac{r}{2(r+1)}}   t^{\frac{1}{r+1}}  = C(\nu_1) \nu_2^{\frac{1}{2(r+1)}}  t^{\frac{1}{r+1}}   \leq C(\nu_1,T)     ,
\end{align}
since $\nu_2 \ll \nu_1$, for $t\in [0,T]$.

Integrating (\ref{psi-1}) over $[0, t]$ and using (\ref{L2}), (\ref{ebase}), and (\ref{eta2}), we obtain
\begin{align} \label{psi-2}
\psi(t) \leq \psi(0) + C(\nu_1,T) \log r       , \quad \text{for all } t \in [0,T].
\end{align}

Since $\eta = \psi^{r+1}$, it follows that
\begin{align} \label{eta-22}
\eta(t) \leq C(\nu_1,T) (\log r)^{r+1} , \quad \text{for all } t \in [0,T].
\end{align}

To obtain a sharper bound for $\eta$ in terms of $r$, we integrate (\ref{eta1}) over $[0, t]$:
\begin{align} \label{eta3}
\eta(t) \leq \eta(0) + C(\nu_1) r \log r + C(\nu_1) r \nu_2^{-\frac{r-1}{2(r+1)}} \int_0^t \|q_x\|_2^{\frac{2r}{r+1}} \eta^{\frac{1}{r+1}} \, ds.
\end{align}

Substituting (\ref{eta-22}) into (\ref{eta3}), we obtain
\begin{align*}
\eta(t) &\leq \eta(0) + C(\nu_1) r \log r + C(\nu_1, T) r \log r \, \nu_2^{-\frac{r-1}{2(r+1)}} \int_0^t \|q_x\|_2^{\frac{2r}{r+1}} \, ds \\
&\leq \eta(0) + C(\nu_1,T) r \log r,
\end{align*}
due to (\ref{eta2}). Thus,
\begin{align} \label{LR}
\|u\|_{2r}^2 = \eta(t) \leq C(\nu_1,T) r \log r,   \quad  \text{for}\; t\in [0,T].
\end{align}

Furthermore, using (\ref{LR}) and the standard estimate to the 2D NSE along with the Brezis-Wainger inequality
\begin{align} \label{BW}
\|f\|_{\infty}^2 \leq C \sup_{r} \frac{\|f\|_r^2}{r\log r}
\log (1+ \|f\|_{H^2}^2)\log(\log (e+ \|f\|_{H^2}^2)),
\end{align}
we obtain
\begin{align}
\|u\|_{\infty}^2  \label{L_inf}
\leq C(\nu_1,T) (e+ |\log \nu_2| ) \log (e+ |\log \nu_2| ).
\end{align}

\vspace{0.1 in}

\subsection{ $\| v\|_{2r}$ Estimates for $r>1$}

We prove this by induction. Suppose that
\begin{align}\label{INDUCT}
&  \max_{0\leq t \leq T} (\|u^r\|_2^2+\|v^r\|_2^2)  + \, \nu_1  \int_0^T \int_{\Omega}  (u^2+v^2)^{r-1} (u_x^2+v_x^{2})  dxdy dt
 \notag\\
&
+ \, \nu_2  \int_0^T \int_{\Omega} (u^2+v^2)^{r-1} (u_y^2+v_y^{2}) dx dy dt \leq N_r(T, r, \nu_1).
\end{align}
Note that (\ref{INDUCT}) holds for \( r = 1 \) due to (\ref{L2}).

Taking the inner product of the first equation in (\ref{vbou}) with $u(u^2+ v^2)^r$ and
the second with $v(u^2+ v^2)^r$, and integrating by parts, we obtain
\begin{align} \label{fu1}
& \frac{1}{2(r+1)} \frac{d}{dt} \int_{\Omega} (u^2+ v^2)^{r+1} \, dx dy \, + \, \nu_1  \int_{\Omega} (|u_x|^2+  |v_x|^{2}) (u^2+ v^2)^r \, dx dy   \notag\\
&\quad + \, \nu_2  \int_{\Omega} (|u_y|^2+ |v_y|^{2}) (u^2+ v^2)^r \, dx dy   \notag \\
& \leq \int_{\Omega} \left|(p_x +q_x) u (u^2+ v^2)^r + (p_y +  q_y  ) v  (u^2+ v^2)^r  \right| \, dx dy  \notag\\
& \leq  C 2^r \int_{\Omega} (|\nabla p| + |\nabla q|) \, (|u|^{2r+1} +|v|^{2r+1})  \, dx dy .
\end{align}
The terms on the right-hand side of (\ref{fu1}) can be estimated as follows. First, using the Cauchy-Schwarz inequality, we have
\begin{align} \label{fu2}
 \int_{\Omega} (|\nabla p| + |\nabla q|) \, |u|^{2r+1}   dx dy
 \le C ( \|\nabla p\|_2+ \|\nabla q\|_2) \, \|u\|_{2(2r+1)}^{2r+1}.
\end{align}
Applying Proposition \ref{triple}, we also have
\begin{align} \label{fu3}
&\int_{\Omega} (|\nabla p| + |\nabla q|) \, |v|^{2r+1} \,dx dy  =   \int_{\Omega} (|\nabla p| + |\nabla q|) \, |v|^{r+1} \, |v|^r  \,dx dy   \notag\\
&\le C ( \|\nabla p\|_2+ \|\nabla q\|_2) \, \|v^{r+1}\|_2^{1/2} \|(v^{r+1})_x\|_2^{1/2}
 \, \|v^{r}\|_2^{1/2} \|(v^{r})_y\|_2^{1/2}  \notag \\
& \le C r^{1/2} ( \|\nabla p\|_2+ \|\nabla q\|_2) \, \|v^{r+1}\|_2^{1/2} \|(v^{r+1})_x\|_2^{1/2}
 \, \|v^{r}\|_2^{1/2} \|(v^{r-1}) u_x\|_2^{1/2}.
\end{align}

Combining (\ref{fu1})-(\ref{fu3}), we obtain
\begin{align} \label{fu4}
& \frac{d}{dt} \int_{\Omega} (u^2+ v^2)^{r+1} dx dy \, + \, 2\nu_1 r  \int_{\Omega} (|u_x|^2+  |v_x|^{2}) (u^2+ v^2)^r dx dy   \notag\\
&  \quad + 2 \nu_2 r \int_{\Omega} (|u_y|^2+ |v_y|^{2}) (u^2+ v^2)^r dx dy   \notag \\
& \leq  C 2^r  ( \|\nabla p\|_2+ \|\nabla q\|_2) \, \|u\|_{2(2r+1)}^{2r+1}     \notag\\
& \quad +C 2^r ( \|\nabla p\|_2+ \|\nabla q\|_2) \, \|v^{r+1}\|_2^{1/2} \|(v^{r+1})_x\|_2^{1/2}
 \, \|v^{r}\|_2^{1/2} \|(v^{r-1}) u_x\|_2^{1/2}.
 \end{align}
Notice that
$ \|(v^{r+1})_x\|_2^2 = (r+1)^2 \int_{\Omega} |v_x|^2 v^{2r} dx dy$.
Then, applying Young's inequality to (\ref{fu4}), we obtain
\begin{align} \label{fu5}
& \frac{d}{dt} \int_{\Omega} (u^2+ v^2)^{r+1} dx dy \, + \,  \nu_1 r  \int_{\Omega} (|u_x|^2+  |v_x|^{2}) (u^2+ v^2)^r dx dy   \notag\\
& \quad   +\nu_2 r \int_{\Omega} (|u_y|^2+ |v_y|^{2}) (u^2+ v^2)^r dx dy   \notag \\
& \leq  C 2^r  ( \|\nabla p\|_2+ \|\nabla q\|_2) \, \|u\|_{2(2r+1)}^{2r+1}     \notag\\
& \quad +C 4^r \nu_1^{-1/3} ( \|\nabla p\|_2+ \|\nabla q\|_2)^{4/3} \, \|v^{r+1}\|_2^{2/3}
 \,\|v^{r}\|_2^{2/3} \|(v^{r-1}) u_x\|_2^{2/3} \notag\\
& \leq C(r,\nu_1) \left[ \|\nabla p\|_2^2 + \|\nabla q\|_2^2 +\|u\|_{2(2r+1)}^{2(2r+1)} +\|v^{r+1}\|_2^{2} \|v^{r}\|_2^{2} \|(v^{r-1}) u_x\|_2^{2}\right].
 \end{align}

Thanks to Grönwall’s inequality, the bounds (\ref{ebase})-(\ref{ebase2}) and (\ref{LR}), and the induction hypothesis (\ref{INDUCT}), we conclude that
\begin{eqnarray*}
&&  \max_{0\leq t \leq T} \int_{\Omega}  (u^2+v^2)^{r+1} dxdy
 + \, \nu_1  \int_0^T \int_{\Omega}  (u^2+v^2)^{r} (u_x^2+v_x^{2})  dxdy dt
 \\
&&
+ \, \nu_2  \int_0^T \int_{\Omega} (u^2+v^2)^{r} (u_y^2+v_y^{2}) dx dy dt
\leq   N_{r+1}(T, r, \nu_1).
\end{eqnarray*}
This completes the induction procedure, and thus (\ref{INDUCT}) holds for all $r\geq 1$.

\vspace{0.2 in}

\section{Vanishing vertical viscosity} \label{sec-vanish}
This section is devoted to proving Theorem \ref{main-thm} by using the energy estimates provided in Section \ref{sec-est}.
Let $(u,v)$ be the unique global solution of system (\ref{vbou}) with initial data $u_0,v_0 \in H^2(\Omega)$.
Let $(U,V)$ be the unique global solution of system (\ref{hbou}) with initial data $U_0,V_0 \in H^2(\Omega)$. Assume that $\|U\|_{H^2} + \|V\|_{H^2} \leq M$ on $[0,T]$, where the uniform bound $M$ is independent of $T$, as shown in our recent paper \cite{Cao} and in the work of Dong et al. \cite{DWXZ}. 

Let $\tilde u = u - U$ and $\tilde v = v - V$. By subtracting (\ref{vbou}) with (\ref{hbou}), we obtain
\begin{equation}\label{diff}
 \left\{
\begin{array}{l}
 \tilde u_t + \tilde u U_x  + u \tilde u_x  +    \tilde v U_y +  v \tilde u_y          + p_x +q_x - P_x = \nu_1 \tilde u_{xx} + \nu_2 \tilde u_{yy} + \nu_2 U_{yy}, \\
 \tilde v_t+ \tilde u V_x  +  u \tilde v_x  +    \tilde v V_y    +    v \tilde v_y  +      p_y  + q_y  - P_y       =  \nu_1 \tilde v_{xx} + \nu_2 \tilde v_{yy} +  \nu_2 V_{yy} , \\
 u_x+v_y=0,    \;\;\;      U_x+V_y=0,   \;\;\;      \tilde u_x + \tilde v_y=0,        \\
 u(x,0,t)=u_y(x,1, t)= v(x,0,t)=v(x,1,t)=0,\\
 V(x,0,t)=V(x,1,t)=0.
 \end{array} \right.
 \end{equation}
Therefore, we have
\begin{align}
&\tilde u(x,0,t) = - U(x,0,t), \;\;\;  \tilde u(x,1,t) = u(x,1,t) - U(x,1,t), \label{dif-1}\\
&\tilde u_y(x,0,t) = u_y(x,0,t) -  U_y(x,0,t),  \;\;\;  \tilde u_y(x,1,t) = -  U_y(x,1,t), \label{dif-2}\\
&\tilde v(x,0,t) =  \tilde v(x,1,t) = 0. \label{dif-3}
\end{align}

Taking the inner product of (\ref{diff}) with $\tilde u$ and $\tilde v$ yields
\begin{align}    \label{diff-1}
&\frac{1}{2} \frac{d}{dt} (\|\tilde u\|_2^2 + \|\tilde v\|_2^2) + \nu_1 (\|\tilde u_x\|_2^2 +  \|\tilde v_x\|_2^2) + \nu_2 (\|\tilde u_y\|_2^2 +  \|\tilde v_y\|_2^2) \notag\\
&= \nu_2 \int_{-\pi}^{\pi} (\tilde u_y \tilde u)|_{y=0}^{y=1} \, dx + \nu_2 \int_{\Omega} (U_{yy} \tilde u + V_{yy} \tilde v) \, dx dy  \notag\\
&\hspace{0.2 in}- \int_{\Omega} [\tilde u^2 U_x + \tilde v^2 V_y + (U_y + V_x) \tilde u \tilde v ] \, dx dy.
 \end{align}

Since we assume $\|U\|_{H^2(\Omega)}$, $\|V\|_{H^2(\Omega)} \leq M$ on $[0,T]$, then
\begin{align}    \label{diff-2}
\nu_2 \int_{\Omega} (U_{yy} \tilde u + V_{yy} \tilde v) \, dx dy  \leq  \nu_2 (\|U_{yy}\|_2 \|\tilde u\|_2 +  \|V_{yy}\|_2 \|\tilde v\|_2) \leq \nu_2 M  (\|\tilde u\|_2 + \|\tilde v\|_2).
 \end{align}

We calculate
\begin{align} \label{diff-3}
&\left|\int_{\Omega} [\tilde u^2 U_x + \tilde v^2 V_y + V_x  \tilde u \tilde v ] \, dx dy \right| = \left|\int_{\Omega} [\tilde u^2 U_x - \tilde v^2 U_x+ V_x  \tilde u \tilde v ] \, dx dy \right|  \notag\\
 & \leq 2 \int_{\Omega} \left(|\tilde u \tilde u_x U | + |\tilde v \tilde v_x U| + |V  \tilde u_x \tilde v| + |V  \tilde u \tilde v_x|   \right) \, dx dy  \notag\\
 &\leq   \frac{\nu_1}{4} (\|\tilde u_x\|_2^2 +  \|\tilde v_x\|_2^2)  + C \nu_1^{-1}(\|U\|_{\infty}^2 + \|V\|_{\infty}^2) (\|\tilde u\|_2^2 +  \|\tilde v\|_2^2 ) \notag\\
& \leq   \frac{\nu_1}{4} (\|\tilde u_x\|_2^2 +  \|\tilde v_x\|_2^2)  + C  \nu_1^{-1}   M^2 (\|\tilde u\|_2^2 +  \|\tilde v\|_2^2 ).
   \end{align}
Also, using Proposition \ref{triple}, we have
\begin{align}   \label{diff-4}
&\int_{\Omega} |U_y  \tilde u \tilde v| \, dx dy   \leq C \|U_y\|_2^{1/2} \|U_{xy}\|_2^{1/2} \|\tilde u\|_2  \|\tilde v\|_2^{1/2}    \|\tilde v_y\|_2^{1/2}
\leq C M  \|\tilde u\|_2  \|\tilde v\|_2^{1/2}    \|\tilde u_x\|_2^{1/2}   \notag\\
&\leq   \frac{\nu_1}{4} \|\tilde u_x\|_2^2 + C \nu_1^{-1/3} M^{4/3}  (\|\tilde u\|_2^2 +  \|\tilde v\|_2^2 ).
\end{align}

Using (\ref{diff-1})-(\ref{diff-4}) and (\ref{dif-1})-(\ref{dif-3}), we obtain
\begin{align}  \label{diff-44}
& \frac{d}{dt} (\|\tilde u\|_2^2 + \|\tilde v\|_2^2) + \nu_1 (\|\tilde u_x\|_2^2 +  \|\tilde v_x\|_2^2) + 2\nu_2 (\|\tilde u_y\|_2^2 +  \|\tilde v_y\|_2^2) \notag\\
&\leq \nu_2 \int_{-\pi}^{\pi} |U_y(1) \tilde u(1)| \, dx +  \nu_2 \int_{-\pi}^{\pi} |\tilde u_y(0) U(0)| \, dx \notag\\
& \quad + \nu_2 M  (\|\tilde u\|_2 + \|\tilde v\|_2) + C  \nu_1^{-1}     M^2  (\|\tilde u\|_2^2 + \|\tilde v\|_2^2).
\end{align}

Notice that
\begin{align*}
\int_{-\pi}^{\pi}  u(1)^2  \, dx =  \int_{-\pi}^{\pi}   \int_0^1 (u^2)_y  \,  dy     dx =   2 \int_{-\pi}^{\pi}   \int_0^1 u u_y  \, dy     dx \leq 2 \|u\|_2 \|u_y\|_2.
\end{align*}
Therefore, we have
\begin{align} \label{diff-5}
 &\int_{-\pi}^{\pi} |U_y(1) \tilde u(1)| \, dx  \leq  C\Big(\int_{-\pi}^{\pi} U_y(1)^2 \, dx  \Big)^{1/2}    \Big(\int_{-\pi}^{\pi}  u(1)^2  \, dx +    \int_{-\pi}^{\pi}  U(1)^2  dx  \Big)^{1/2}   \notag\\
 &  \leq C M  \left( \|u\|_2 \|u_y\|_2      +     M^2 \right)^{1/2} \leq C M  \|u_y\|_2^{1/2} + C M^2.
 \end{align}

Also,
\begin{align*}
\int_{-\pi}^{\pi}  u_y(0)^2  \, dx =  -\int_{-\pi}^{\pi}   \int_0^1 (u_y^2)_y   \, dy     dx = -  2 \int_{-\pi}^{\pi}   \int_0^1 u_y u_{yy} \,  dy     dx \leq 2 \|u_y\|_2 \|u_{yy}\|_2.
\end{align*}
Therefore,
\begin{align} \label{diff-6}
&\int_{-\pi}^{\pi} |\tilde u_y(0) U(0)| \, dx
\leq C\Big(\int_{-\pi}^{\pi}  u_y(0)^2  \, dx +    \int_{-\pi}^{\pi}  U_y(0)^2  dx  \Big)^{1/2}      \Big(\int_{-\pi}^{\pi} U(0)^2 dx  \Big)^{1/2}          \notag\\
&\leq  C( \|u_y\|_2 \|u_{yy}\|_2 + M^2)^{1/2} M  \leq C M  \|u_y\|_2^{1/2} \|u_{yy}\|_2^{1/2} + C M^2.
\end{align}

Combining the estimates (\ref{diff-44})-(\ref{diff-6}), we obtain that
\begin{align*}
&\frac{d}{dt} (\|\tilde u\|_2^2 + \|\tilde v\|_2^2) + \nu_1 (\|\tilde u_x\|_2^2 +  \|\tilde v_x\|_2^2) + 2\nu_2 (\|\tilde u_y\|_2^2 +  \|\tilde v_y\|_2^2) \notag\\
&\leq  C\nu_2 M^2 + C \nu_2 M  \|u_y\|_2^{1/2} +  C \nu_2 M  \|u_y\|_2^{1/2} \|u_{yy}\|_2^{1/2}  + C  \nu_1^{-1}     M^2  (\|\tilde u\|_2^2 + \|\tilde v\|_2^2).
\end{align*}

By Gr\"onwall's inequality, for all $t\in [0,T]$, we have
\begin{align} \label{diff-7}
&\|\tilde u(t)\|_2^2 + \|\tilde v(t)\|_2^2   \notag\\
&\leq e^{C  \nu_1^{-1}    M^2 t} \Big[ \|\tilde u_0\|_2^2 +  \|\tilde v_0\|_2^2 + C\nu_2 M^2 t  +   C \nu_2 M      \int_0^t   (\|u_y\|_2^{1/2} +  \|u_y\|_2^{1/2} \|u_{yy}\|_2^{1/2}) \, ds       \Big] \notag\\
&\leq  e^{C  \nu_1^{-1}      M^2 t} \Big[ \|\tilde u_0\|_2^2 +  \|\tilde v_0\|_2^2 + C\nu_2 M^2 t   \notag\\
& \hspace{0.2 in }+  C M   \nu_2^{3/4}   \int_0^t  \nu_2^{1/4}   \|u_y\|_2^{1/2} \, ds   + C M \nu_2^{1/4} \int_0^t  (\nu_2^{1/4}   \|u_y\|_2^{1/2}) (\nu_2^{1/2}    \|u_{yy}\|_2^{1/2}   ) \, ds  \Big] \notag\\
&\leq e^{C  \nu_1^{-1}      M^2 T}(\|\tilde u_0\|_2^2 +  \|\tilde v_0\|_2^2  + K \nu_2^{1/4}  )  \longrightarrow  0,
\end{align}
if $\nu_2 \rightarrow 0$ and $\|\tilde u_0\|_2^2 +  \|\tilde v_0\|_2^2 \rightarrow 0$, where we have used the bounds (\ref{L2}) and (\ref{u2-8}).

Furthermore, we obtain from (\ref{diff-7}), (\ref{LR}) and (\ref{INDUCT}) that
\begin{align*}
\|\tilde u\|_{2r} + \|\tilde v\|_{2r} &\leq    \left(\|\tilde u\|_{2} + \|\tilde v\|_{2} \right)^{\frac{1}{2r}}   \left(\|\tilde u\|_{4r-2} + \|\tilde v\|_{4r-2} \right)^{\frac{2r-1}{2r}}  \notag\\
&\leq  C(M, \nu_1, T) \left(\|\tilde u\|_{2} + \|\tilde v\|_{2} \right)^{\frac{1}{2r}}  \leq   C(M, \nu_1, T)  (\|\tilde u_0\|_2 +  \|\tilde v_0\|_2  + \nu_2^{1/8}  )^{\frac{1}{2r}}      \longrightarrow  0,
\end{align*}
if $\nu_2 \rightarrow 0$ and $\|\tilde u_0\|_2^2 +  \|\tilde v_0\|_2^2 \rightarrow 0$. This completes the proof of Theorem \ref{main-thm}.

\vspace{0.2 in}

\end{document}